\documentclass[11pt]{article}
\usepackage[margin=1in]{geometry}
\usepackage{amsmath,amssymb,amsthm}
\usepackage{mathtools}
\usepackage{enumitem}
\usepackage[colorlinks=true,linkcolor=blue,citecolor=blue,urlcolor=blue]{hyperref}

\newtheorem{theorem}{Theorem}
\newtheorem{proposition}{Proposition}
\newtheorem{lemma}{Lemma}
\newtheorem{corollary}{Corollary}
\theoremstyle{definition}
\newtheorem{remark}{Remark}
\newtheorem{conjecture}{Conjecture}

\newcommand{\E}{\mathbb{E}}
\newcommand{\R}{\mathbb{R}}
\newcommand{\PP}{\mathbb{P}}
\newcommand{\uvec}{\mathbf{u}}
\newcommand{\pvec}{\mathbf{p}}
\newcommand{\qvec}{\mathbf{q}}
\newcommand{\PN}{\mathcal{P}_N}

\DeclareMathOperator{\Cov}{Cov}

\title{\textbf{Equal probabilities maximize the expected deficit\\
in the siblings of the coupon collector}}
\author{Aristides V. Doumas$^{1,*}$ and S. Spektor$^{2}$}
\date{\today}

\begin{document}
\maketitle
\begin{center}
$^{1}$Department of Mathematics, School of Applied Mathematical and Physical Sciences,\\
National Technical University of Athens, Zografou Campus, 15780 Athens, Greece\\
$^{*}$Archimedes/Athena Research Center, Greece\\
Email: adou@math.ntua.gr, aris.doumas@hotmail.com \\[6pt]
$^{2}$Quantitative Science Department, Canisius University,\\
2001 Main Street, Buffalo, NY 14208-1098, USA\\
Email: spektors@canisius.edu\\[6pt]
\end{center}

\begin{abstract}
In the siblings (or brotherhood) variant of the coupon collector's problem, a main
collector draws coupons until her own album is complete and passes every duplicate
down a chain of siblings; the $j$th collector is then left with $U_j^N$ empty places,
$j\ge 2$. It has been conjectured [stated as an open problem in the work that
introduced the model] that, for every fixed number of coupon types $N$ and every
$j\ge 2$, the expected deficit $\E[U_j^N]$ is maximized by the equiprobable coupon
distribution. We prove this in a sharp, finite-$N$ form: $\E[U_j^N]$ is strictly
larger at the uniform vector than at any other probability vector, and indeed strictly
increases along every ray running from an arbitrary distribution toward the uniform
one. The proof is exact and elementary in its ingredients. An inclusion--exclusion
step turns the governing Poissonized integral into a one-dimensional integral with a
separable integrand; a single integration by parts then rewrites the radial
derivative of $\E[U_j^N]$ as a positively weighted covariance of an increasing
function, whose sign is settled by Chebyshev's correlation inequality. We show that
$\E[U_j^N]$ is \emph{not} Schur-concave, so that no majorization or pairwise-smoothing
argument can yield the result, and we explain why the recent variance-extremality
method of Long~[Long, arXiv:2604.25108, 2026] does not transfer. As by-products we
obtain a finite closed form for $\E[U_j^N]$ over subsets of the coupon set and the
exact Hessian of $\E[U_j^N]$ at the uniform vector. The argument extends without
change to all real $j>1$.

\medskip
\noindent\textbf{Keywords:} coupon collector's problem; double Dixie cup problem;
siblings/brotherhood model; Poissonization; Schur convexity; correlation inequality;
extremal probability vector.

\noindent\textbf{2020 MSC:} 60C05; 60F99; 05A20.
\end{abstract}

\section{Introduction and main result}\label{sec:intro}

\subsection*{The model}
There are $N$ types of coupons. On each purchase a coupon of type $k$ is obtained,
independently, with probability $p_k>0$, where $\sum_{k=1}^N p_k=1$. A main collector
buys coupons until her album is complete, that is, until each of the $N$ types has
appeared at least once; write $T_N$ for the (random) number of purchases this takes.
Duplicates are not discarded but passed down a chain of siblings: the first double
goes to the second collector, a second copy of an already-doubled type goes to the
third collector, and so on. Equivalently, at the instant $T_N$ the album of the $j$th
collector contains type $k$ precisely when type $k$ has been drawn at least $j$ times.
The quantity of interest, introduced and studied by Doumas and
Papanicolaou~\cite{DPsiblings}, is the number of empty places remaining in the $j$th
collector's album,
\[
U_j^N \;=\; \#\bigl\{\,k\in\{1,\dots,N\} : \text{type $k$ drawn fewer than $j$ times by }T_N\,\bigr\},
\qquad j\ge 2,
\]
so that $U_1^N\equiv 0$ by construction. The brotherhood model has a long history in
the equiprobable case, going back to the symbolic treatment of Foata, Han and
Lass~\cite{FHL} and Foata and Zeilberger~\cite{FZ}; the unequal-probability theory and
the random variable $U_j^N$ are due to~\cite{DPsiblings}.

Throughout, $\PN=\{\pvec\in\R^N:p_k>0,\ \sum_{k=1}^N p_k=1\}$ is the open probability
simplex and $\uvec=(1/N,\dots,1/N)$ is the uniform (equiprobable) vector. The basic
exact identity, obtained in~\cite{DPsiblings} by Poissonizing the purchase process
(see also Brayton~\cite{Brayton} and the rising-moment formalism
of~\cite{DP2012,DP2016}), is
\begin{equation}\label{eq:DP}
\E[U_j^N]=\sum_{k=1}^N\int_0^\infty p_k e^{-p_k t}\,
\frac{(p_k t)^{j-1}}{(j-1)!}\,\prod_{i\ne k}\bigl(1-e^{-p_i t}\bigr)\,dt,
\qquad j\ge 2 .
\end{equation}

\subsection*{The conjecture}
A recurring theme in this circle of problems is that the equiprobable distribution is
\emph{extremal}: among all coupon distributions it is the easiest to collect on
average~\cite{DP2016}, and, as was conjectured in~\cite{DP2012} and recently
established, it minimizes the variance of the completion time. In the same spirit, the
following was raised for the siblings model.

\begin{conjecture}[stated in the work introducing the model]\label{conj:main}
For every $N\ge 2$ and every integer $j\ge 2$, the map $\pvec\mapsto\E[U_j^N]$ on
$\PN$ attains its maximum at the uniform vector $\uvec$, and only there.
\end{conjecture}

The intuition is that spreading probability evenly keeps the siblings as far behind as
possible: concentrating mass on a few common types lets the siblings fill those
frequent slots quickly, while the rare types---which delay the main collector
anyway---are exactly the ones that would have stayed empty. Making this precise is
nontrivial, because, as we show in Section~\ref{sec:proof}, $\E[U_j^N]$ is not a
Schur-concave function of $\pvec$, so the usual majorization route to such extremal
statements is closed.

\subsection*{Main result}
We prove Conjecture~\ref{conj:main} in a strong, radial form.

\begin{theorem}\label{thm:main}
Fix $N\ge 2$ and an integer $j\ge 2$. For every $\pvec\in\PN$ with $\pvec\ne\uvec$,
write $\qvec(\theta)=\uvec+\theta(\pvec-\uvec)$, $\theta\in[0,1]$, for the segment from
$\uvec$ to $\pvec$. Then
\[
\frac{d}{d\theta}\,\E[U_j^N]\bigl(\qvec(\theta)\bigr)<0
\qquad\text{for every }\theta\in(0,1].
\]
In particular $\E[U_j^N]$ is strictly decreasing along this segment, so
\[
\E[U_j^N](\pvec)<\E[U_j^N](\uvec)\qquad\text{for all }\pvec\in\PN\setminus\{\uvec\},
\]
and the uniform vector is the unique maximizer of $\E[U_j^N]$ on $\PN$.
\end{theorem}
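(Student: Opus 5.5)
The plan is to reduce the radial derivative to a weighted sum of covariances, over the index $k$, between $q_k$ and a monotone function of $q_kt$, and then to apply Chebyshev's correlation (sum) inequality. First I rewrite \eqref{eq:DP} so that the dependence on $\pvec$ is as separable as possible. Put $F_{\pvec}(t)=\prod_{i=1}^N(1-e^{-p_it})$. Dividing and multiplying the $k$th summand by $1-e^{-p_kt}$ gives
\[
\E[U_j^N](\pvec)=\int_0^\infty F_{\pvec}(t)\,\frac1t\sum_{k=1}^N g(p_kt)\,dt,
\qquad g(x)=\frac{x^{j}e^{-x}}{(j-1)!\,(1-e^{-x})}.
\]
The integrand is then the product of a function of $t$ and a sum of functions of $p_kt$. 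As a cross-check, and to control boundary terms, I would also expand $\prod_{i\ne k}(1-e^{-p_it})$ by inclusion--exclusion. This turns each term into a finite sum over subsets $S$ of elementary integrals $\int_0^\infty e^{-(p_k+p_S)t}t^{j-1}\,dt$, which gives a closed form and justifies differentiating under the integral sign.

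Next I differentiate along $\qvec(\theta)$ with direction $\mathbf d=\pvec-\uvec$, where $\sum_k d_k=0$. Two contributions appear. From the sum I get $\int F\sum_k d_k\, g'(q_kt)\,dt$. From $F$ I get $\int F\,\frac1t\sum_l g(q_lt)\,\sum_k d_k\, t\,\psi(q_kt)\,dt$ with $\psi(x)=e^{-x}/(1-e^{-x})$. The essential identity is that $d_k=(q_k-1/N)/\theta$ for $\theta>0$. Hence any expression $\sum_k d_k\,h(q_kt)$ equals $\theta^{-1}\sum_k(q_k-\bar q)\,h(q_kt)$ with $\bar q=1/N$, which is $N/\theta$ times the covariance of $q_K$ and $h(q_Kt)$ for $K$ uniform on $\{1,\dots,N\}$. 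The terms involving $g'$ are not yet of a signed form. To fix this I integrate by parts in $t$, using $t\,\partial_t F=F\sum_l q_lt\,\psi(q_lt)$ and $t\,\partial_t[g(q_kt)]=q_kt\,g'(q_kt)$. This converts $q_k$-derivatives into $t$-derivatives. The boundary terms vanish because $F(0)=0$ to order $t^N$ and there is exponential decay at infinity. The goal of this step is a representation
\[
\frac{d}{d\theta}\E[U_j^N](\qvec(\theta))=\frac1\theta\int_0^\infty w_\theta(t)\sum_{k=1}^N\Bigl(q_k-\tfrac1N\Bigr)\,\phi(q_kt)\,dt,
\]
with an explicit weight $w_\theta(t)\ge 0$ (built from $F$ and $t$) and a single-variable function $\phi$ that depends only on $j$.

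Finally, once $\phi$ is shown to be strictly decreasing on $(0,\infty)$, Chebyshev's sum inequality gives the sign. Indeed, $q_k$ is increasing in $q_k$ while $\phi(q_kt)$ is decreasing, so $\sum_k(q_k-\bar q)\phi(q_kt)<0$ for every $t>0$ whenever the $q_k$ are not all equal. This is exactly the case for $\theta\in(0,1]$ and $\pvec\ne\uvec$. Integrating against $w_\theta>0$ gives a strictly negative derivative. The global statement then follows by integrating over $\theta\in[0,1]$. Uniqueness of the maximizer is immediate, since every $\pvec\ne\uvec$ is the endpoint of such a segment.

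The main obstacle is the integration-by-parts step. I must choose which factor to move the derivative onto so that every term lands in the form $\sum_k(q_k-\bar q)\phi(q_kt)$ with a \emph{common} nonnegative weight and a \emph{monotone} $\phi$. Cross terms of the shape $\bigl(\sum_lg(q_lt)\bigr)\bigl(\sum_k d_k\psi(q_kt)\bigr)$ do not separate termwise. My first attempt is to write $\sum_lg(q_lt)=t\,\partial_t$ of something only when combined with $F$, namely to use $\partial_t\bigl(F\cdot\text{stuff}\bigr)$ so that this cross term cancels against part of the $g'$ term. I would then verify monotonicity of the resulting $\phi$, expected to be of the form $x^{j}e^{-x}/(1-e^{-x})$ or a derivative thereof, by a direct calculus check valid for all real $j>1$. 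If a single $\phi$ is not attainable, the fallback is to show that each of the finitely many pieces is separately a covariance of opposite-monotone functions.
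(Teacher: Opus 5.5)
Your setup agrees with the paper's: the separable form $\int_0^\infty F(t)\,t^{-1}\sum_k g(q_kt)\,dt$, differentiation along the segment, $d_k=(q_k-\tfrac1N)/\theta$, and an integration by parts in $t$ to remove $g'$. But the decisive step is left open, and the target you set for it is not what the computation delivers.

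Carry out the integration by parts, using $g'(q_at)=q_a^{-1}\partial_t g(q_at)$ and $\partial_tF=F\sum_i q_i\,r(q_it)$ with $r(x)=1/(e^x-1)$ (your $\psi$). You get $\frac{d}{d\theta}\E[U_j^N](\qvec(\theta))=\int_0^\infty F(t)K(t)\,dt$ with
\[
K(t)=\Bigl(\sum_k g(q_kt)\Bigr)\sum_a d_a\,r(q_at)\;-\;\frac{t}{(j-1)!}\Bigl(\sum_i q_i\,r(q_it)\Bigr)\sum_a d_a\,(q_at)^{j-1}r(q_at).
\]
Each piece already has your shape: a common positive weight times $\sum_a(q_a-\tfrac1N)\phi(q_at)$. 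In the first piece $\phi=r$. In the second $\phi(x)=x^{j-1}r(x)$, preceded by a minus sign.

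The first piece is negative by Chebyshev. For $j=2$, however, $x/(e^x-1)$ is also strictly decreasing, so the second piece is strictly \emph{positive}. For $j\ge3$, $x^{j-1}r(x)$ is unimodal, so the integrand of the second piece changes sign in $t$. Your fallback of signing the pieces separately therefore fails. The candidates you name for $\phi$, namely $x^je^{-x}/(1-e^{-x})$ and its derivative, are not monotone either. The sign of the derivative comes from a cancellation \emph{between} the two cross terms, and this cancellation cannot be seen coordinate by coordinate under the uniform law on the indices.

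The missing idea is to change the measure on the indices to the $t$-dependent weights $w_a=r(q_at)$, with $W=\sum_a w_a$. Since $g(x)=\frac{x^j}{(j-1)!}r(x)$, both pieces become products of $w$-averages. With $c=t^j/(j-1)!$,
\[
\frac{K(t)}{c\,W^2}=\E_w[q^{j}]\,\E_w[d]-\E_w[q]\,\E_w[d\,q^{j-1}].
\]
Substituting $d_a=(q_a-\tfrac1N)/\theta$, the two $\E_w[q^j]\E_w[q]$ terms cancel, leaving $-\frac{1}{N\theta}\Cov_w(q,q^{j-1})$. Chebyshev's inequality for the probability vector $w/W$ and the increasing function $x^{j-1}$ then makes the integrand strictly negative for every $t>0$.

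Your factor $1/\theta$ is correct; the paper's display \eqref{eq:final} drops it, which does not affect the sign. Without this reorganization into a weighted covariance, the proposal does not establish the sign of the radial derivative, so as written it does not prove the theorem.
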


The radial conclusion is strictly stronger than the bare extremal statement of
Conjecture~\ref{conj:main}, and parallels the radial form in which the companion
variance conjecture was settled. Our method is, however, of a different and more
elementary nature; we comment on the comparison in Remark~\ref{rem:long}.

\subsection*{Method and organization}
Section~\ref{sec:id} records two exact reformulations of~\eqref{eq:DP}: a finite
rational closed form over subsets of the coupon set
(Proposition~\ref{prop:closed}) and a one-dimensional integral with a separable
integrand (Proposition~\ref{prop:integral}). Section~\ref{sec:proof} proves
Theorem~\ref{thm:main}. The pivotal step is an integration by parts that converts the
radial derivative of $\E[U_j^N]$, whose integrand has no fixed sign, into the
manifestly signed expression
\begin{equation}\label{eq:preview}
\frac{d}{d\theta}\,\E[U_j^N]\bigl(\qvec(\theta)\bigr)
=-\frac1N\int_0^\infty\frac{t^{\,j}}{(j-1)!}\,\Phi(t)\,W(t)^2\,
\Cov_{w_t}\!\bigl(q,\,q^{\,j-1}\bigr)\,dt,
\end{equation}
a weighted covariance of the increasing function $x\mapsto x^{j-1}$, nonnegative by
Chebyshev's inequality. The same section derives the Hessian at $\uvec$, records the
failure of Schur-concavity, and contrasts the approach with the variance problem.

\section{Two exact identities}\label{sec:id}

Define, for $x>0$ and $t>0$,
\begin{equation}\label{eq:defs}
r(x):=\frac{1}{e^{x}-1},\qquad
g_j(x):=\frac{x^{\,j}e^{-x}}{(j-1)!\,(1-e^{-x})}=\frac{x^{\,j}}{(j-1)!}\,r(x),
\qquad
\Phi(t):=\prod_{i=1}^N\bigl(1-e^{-p_i t}\bigr).
\end{equation}
Here $r$ is positive and strictly decreasing on $(0,\infty)$, and the second equality
in the definition of $g_j$ is the identity $r(x)=e^{-x}/(1-e^{-x})$.

\begin{proposition}[Finite rational form]\label{prop:closed}
For every $\pvec\in\PN$ and integer $j\ge 2$,
\begin{equation}\label{eq:closed}
\E[U_j^N]
=\sum_{\varnothing\ne A\subseteq\{1,\dots,N\}}(-1)^{|A|-1}\,
\frac{\sum_{i\in A}p_i^{\,j}}{\bigl(\sum_{i\in A}p_i\bigr)^{j}} .
\end{equation}
\end{proposition}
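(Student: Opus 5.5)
Our plan is to start from the Poissonized identity \eqref{eq:DP} and expand the product by inclusion--exclusion. For fixed $k$, write
\[
\prod_{i\ne k}\bigl(1-e^{-p_i t}\bigr)=\sum_{B\subseteq\{1,\dots,N\}\setminus\{k\}}(-1)^{|B|}e^{-t\sum_{i\in B}p_i}.
\]
Each term in the $k$th summand of \eqref{eq:DP} is then a Gamma integral:
\[
\int_0^\infty p_k\,\frac{(p_k t)^{j-1}}{(j-1)!}\,e^{-t(p_k+s_B)}\,dt=\frac{p_k^{\,j}}{(p_k+s_B)^{j}},
\qquad s_B:=\sum_{i\in B}p_i .
\]
To justify swapping the finite sum with the integral, note that the sum is finite and each term is integrable, because $p_k>0$ gives exponential decay. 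So no convergence issue arises.

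Next we reindex by setting $A=B\cup\{k\}$. The sign becomes $(-1)^{|A|-1}$ and the denominator becomes $\bigl(\sum_{i\in A}p_i\bigr)^j$. Summing over $k$ and $B$ is the same as summing over nonempty $A$ and then over $k\in A$. For fixed $A$, collecting the terms gives $\sum_{k\in A}p_k^{\,j}\big/\bigl(\sum_{i\in A}p_i\bigr)^j$, which is exactly \eqref{eq:closed}.

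The main point to get right is this bookkeeping of the reindexing $(k,B)\leftrightarrow(A,k\in A)$ and its sign. There is no analytic obstacle. As a sanity check, $N=1$ gives $1$, which is consistent with \eqref{eq:DP} reducing to $\int_0^\infty p e^{-pt}(pt)^{j-1}/(j-1)!\,dt=1$.
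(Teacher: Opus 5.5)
Your proposal is correct and matches the paper's proof step for step. Both expand the product by inclusion--exclusion, integrate termwise via the Gamma integral (justified because the sum is finite and $p_k+s_B\ge p_k>0$), and reindex by $A=B\cup\{k\}$ with sign $(-1)^{|A|-1}$.
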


\begin{proof}
Expand the product in~\eqref{eq:DP} by inclusion--exclusion,
\[
\prod_{i\ne k}\bigl(1-e^{-p_it}\bigr)
=\sum_{S\subseteq\{1,\dots,N\}\setminus\{k\}}(-1)^{|S|}e^{-p_St},
\qquad p_S:=\sum_{i\in S}p_i,
\]
substitute into~\eqref{eq:DP}, and integrate termwise using the Gamma integral
$\int_0^\infty t^{\,j-1}e^{-(p_k+p_S)t}\,dt=(j-1)!\,(p_k+p_S)^{-j}$, which is justified
by absolute convergence (every $p_k+p_S\ge p_k>0$). This yields
\[
\E[U_j^N]=\sum_{k=1}^N\sum_{S\subseteq\{1,\dots,N\}\setminus\{k\}}
(-1)^{|S|}\frac{p_k^{\,j}}{(p_k+p_S)^{\,j}} .
\]
Set $A=\{k\}\cup S$, so $|S|=|A|-1$ and $p_k+p_S=p_A:=\sum_{i\in A}p_i$. For a fixed
nonempty $A$ the distinguished element $k$ ranges over $A$, contributing
$(-1)^{|A|-1}p_k^{\,j}/p_A^{\,j}$ each; summing over $k\in A$ gives~\eqref{eq:closed}.
\end{proof}

\begin{proposition}[Separable integral form]\label{prop:integral}
For every $\pvec\in\PN$ and integer $j\ge 2$,
\begin{equation}\label{eq:integral}
\E[U_j^N]=\int_0^\infty \frac{\Phi(t)}{t}\sum_{k=1}^N g_j(p_k t)\,dt .
\end{equation}
\end{proposition}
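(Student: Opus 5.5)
The identity should follow from~\eqref{eq:DP} by pulling the missing factor $1-e^{-p_k t}$ back into the product, so that every summand carries the common factor $\Phi(t)$. No inclusion--exclusion is needed; this is a pointwise algebraic rewriting of the integrand, followed by exchanging a finite sum with the integral.

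\emph{Rewriting the integrand.} Fix $k$ and $t>0$. Since $p_k>0$, we have $1-e^{-p_k t}>0$, so
\[
\prod_{i\ne k}\bigl(1-e^{-p_i t}\bigr)=\frac{\Phi(t)}{1-e^{-p_k t}} .
\]
Substituting this into the $k$th integrand of~\eqref{eq:DP} gives
\[
p_k e^{-p_k t}\frac{(p_k t)^{j-1}}{(j-1)!}\cdot\frac{\Phi(t)}{1-e^{-p_k t}}
=\frac{\Phi(t)}{t}\cdot\frac{(p_k t)^{j}e^{-p_k t}}{(j-1)!\,(1-e^{-p_k t})}
=\frac{\Phi(t)}{t}\,g_j(p_k t),
\]
where we used $p_k\,(p_k t)^{j-1}=(p_k t)^j/t$ and then the first expression for $g_j$ in~\eqref{eq:defs} with $x=p_k t$.

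\emph{Summing over $k$.} The sum over $k$ in~\eqref{eq:DP} is finite and each integral converges absolutely. Hence we may interchange $\sum_k$ and $\int_0^\infty$, which yields~\eqref{eq:integral}.

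\emph{Integrability check.} We confirm directly that the integrand $\frac{\Phi(t)}{t}\sum_k g_j(p_k t)$ is integrable on $(0,\infty)$.
\begin{itemize}
\item Near $t=0$: $\Phi(t)\sim t^N\prod_i p_i$ and $g_j(x)\sim x^{j-1}/(j-1)!$, so the integrand is $O(t^{N+j-2})$, which is bounded for $N\ge 2$ and $j\ge 2$.
\item As $t\to\infty$: $\Phi(t)\le 1$ and $g_j(x)=O(x^j e^{-x})$, so the integrand decays exponentially.
\end{itemize}

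There is no genuine obstacle. The only points needing care are that the division by $1-e^{-p_k t}$ is legitimate for $t>0$, and that the behaviour at $t\to0$ of the new form of the integrand is still integrable, which the check above settles.
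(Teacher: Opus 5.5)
Your proposal is correct and follows essentially the same route as the paper: rewrite $\prod_{i\ne k}\bigl(1-e^{-p_i t}\bigr)$ as $\Phi(t)/(1-e^{-p_k t})$, recognize $\frac{\Phi(t)}{t}\,g_j(p_k t)$ in each summand of~\eqref{eq:DP}, and sum over $k$. Your integrability check ($O(t^{N+j-2})$ near $0$, exponential decay at $\infty$) also matches the paper's.
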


\begin{proof}
In the $k$th summand of~\eqref{eq:DP},
\[
p_k e^{-p_k t}\frac{(p_k t)^{j-1}}{(j-1)!}
=\frac1t\,\frac{(p_k t)^{j}e^{-p_k t}}{(j-1)!},
\qquad
\prod_{i\ne k}\bigl(1-e^{-p_i t}\bigr)=\frac{\Phi(t)}{1-e^{-p_k t}} ,
\]
so the summand equals $\tfrac1t\,\Phi(t)\,g_j(p_k t)$ by~\eqref{eq:defs}. Summing over
$k$ gives~\eqref{eq:integral}; the integrand is nonnegative and integrable, since near
$t=0$ one has $\Phi(t)=O(t^N)$ and $g_j(p_kt)/t=O(t^{j-2})$, while for large $t$ the
factor $g_j(p_kt)$ decays exponentially.
\end{proof}

Identity~\eqref{eq:closed} is the siblings analogue of the rising-moment subset
formulas in~\cite{DP2016,Long} and reduces every question about $\E[U_j^N]$ to a
finite algebraic one; identity~\eqref{eq:integral} is the form we differentiate below.
Its decisive structural feature is that the dependence on the coordinates has been
separated: $\Phi(t)$ is a symmetric product, while $\sum_k g_j(p_kt)$ is a plain sum
over coordinates.

\section{Proof of the main theorem}\label{sec:proof}

\subsection*{The radial derivative as a covariance}
Fix $\pvec\in\PN$ with $\pvec\ne\uvec$. Put $h=\pvec-\uvec$, so $\sum_a h_a=0$ and
$h\ne0$, and for $\theta\in[0,1]$ let $\qvec(\theta)=\uvec+\theta h$ with coordinates
$q_a=q_a(\theta)=\tfrac1N+\theta h_a$. Since $p_a>0$ and $1/N>0$, every $q_a(\theta)>0$
for $\theta\in[0,1]$; thus the whole segment lies in the closed simplex, and in $\PN$
for $\theta\in[0,1)$. Write $\psi(\theta):=\E[U_j^N]\bigl(\qvec(\theta)\bigr)$, a
smooth function of $\theta$.

\begin{lemma}\label{lem:radial}
For every $\theta\in(0,1]$,
\begin{equation}\label{eq:final}
\psi'(\theta)=-\frac{1}{N}\int_0^\infty \frac{t^{\,j}}{(j-1)!}\,
\Phi(t)\,W(t)^2\,\Cov_{w_t}\!\bigl(q,\;q^{\,j-1}\bigr)\,dt,
\end{equation}
where, for each $t>0$, the weights $w_{t,a}=r(q_a t)>0$ have total mass
$W(t)=\sum_{a=1}^N w_{t,a}$, and $\Cov_{w_t}$ denotes covariance with respect to the
probability vector $\bigl(w_{t,a}/W(t)\bigr)_{a=1}^N$:
\[
\Cov_{w_t}(q,q^{j-1})
=\sum_a\frac{w_{t,a}}{W(t)}\,q_a^{\,j}
-\Bigl(\sum_a\frac{w_{t,a}}{W(t)}\,q_a\Bigr)
 \Bigl(\sum_a\frac{w_{t,a}}{W(t)}\,q_a^{\,j-1}\Bigr).
\]
\end{lemma}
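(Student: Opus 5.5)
The plan is to differentiate the separable form \eqref{eq:integral} under the integral sign. Then I will use the scale invariance of $\E[U_j^N]$ to trade the direction $h$ for the centred vector $q-\tfrac1N$. Finally, one integration by parts in $t$ should remove the unsigned part of the integrand. I have not checked the final algebra below in full; the main risk is in the last step.

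\textbf{Step 1: differentiate.} With $q_a'(\theta)=h_a$ and $r(x)=e^{-x}/(1-e^{-x})$, we have
\[
\partial_\theta\Phi(t)=t\,\Phi(t)\sum_a h_a\,r(q_at).
\]
Differentiation under the integral is justified by the same domination as in Proposition~\ref{prop:integral}, uniformly for $\theta$ in compacts of $(0,1]$ where all $q_a$ stay bounded below. This gives
\[
\psi'(\theta)=\int_0^\infty \Phi(t)\Bigl[\Bigl(\sum_a h_a w_{t,a}\Bigr)\Bigl(\sum_b g_j(q_bt)\Bigr)+\sum_a h_a\,g_j'(q_at)\Bigr]dt .
\]

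\textbf{Step 2: replace $h$ by $q$ in the relevant terms.} By Proposition~\ref{prop:closed}, $\E[U_j^N]$ is homogeneous of degree $0$ in $\pvec$, so Euler's relation $\sum_a q_a\partial_a\E[U_j^N]=0$ holds.

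At the level of the integral, this is exactly the statement that replacing $h_a$ by $q_a$ in the bracket gives $\tfrac{d}{dt}\bigl[\Phi(t)\sum_b g_j(q_bt)\bigr]$. This total derivative integrates to zero, because the boundary terms vanish: the product is $O(t^{N+j})$ at $0$ and decays exponentially at $\infty$.

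Since $h=(q-\tfrac1N\mathbf 1)/\theta$ and covariances ignore additive constants, I expect the whole expression to reorganize into terms of the form $\sum_a w_{t,a}(q_a-\bar q)(\cdots)$. Here $\bar q$ is the $w_t$-mean, which gives the factor $W(t)^2\Cov_{w_t}$.

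\textbf{Step 3: expand $g_j'$.} Using $r'=-r(1+r)$,
\[
g_j'(x)=\frac{x^{j-1}}{(j-1)!}\bigl[j\,r(x)-x\,r(x)(1+r(x))\bigr].
\]
Hence
\[
\sum_a h_a g_j'(q_at)=\frac{t^{j-1}}{(j-1)!}\sum_a h_a q_a^{j-1}w_{t,a}\bigl(j-q_at(1+w_{t,a})\bigr).
\]
The terms that are not already of the form (weights)$\times q_a^{j-1}$ must be removed by a second integration by parts against $\Phi$. This uses $\Phi'(t)=\Phi(t)\sum_a q_a w_{t,a}$ and $\partial_t w_{t,a}=-q_a w_{t,a}(1+w_{t,a})$. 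The goal is that after this step every surviving term pairs $\sum_a w_{t,a}h_a q_a^{\,j-1}$ against $\sum_b w_{t,b}q_b$, or $\sum_a w_{t,a}h_a q_a$ against $\sum_b w_{t,b}q_b^{\,j-1}$, with the common factor $t^j\Phi(t)/(j-1)!$. Such a pairing is exactly
\[
W(t)^2\Cov_{w_t}(h,q^{j-1})=\tfrac1\theta W(t)^2\Cov_{w_t}(q,q^{j-1}).
\]
The factors $\tfrac1N$ and $\tfrac1\theta$ should then be absorbed by again using $\sum_a h_a=0$ and Step 2, yielding \eqref{eq:final}.

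\textbf{Main obstacle.} I expect the hardest part to be choosing the correct primitive for the integration by parts in Step 3. Concretely, one must find a function $F(t)$, built from $\Phi$, $t^j$, $\sum_a h_a q_a^{j-1}w_{t,a}$ and $\sum_b g_j(q_bt)$, whose $t$-derivative cancels precisely the non-covariance terms. My first attempt will be $F(t)=\Phi(t)\,t^{j}\sum_a h_a q_a^{j-1}w_{t,a}/(j-1)!$. I will check that its boundary terms vanish: it behaves like $t^{N+1}$ near $0$ and decays exponentially at infinity. I will then compare $F'$ with the bracket from Step 1 term by term, adjusting by a multiple of the exact derivative from Step 2 if a residual remains.
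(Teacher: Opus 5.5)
Your ingredients are the right ones. Your ``first attempt'' primitive is exactly the paper's boundary term: since $\frac{h_a}{q_a}g_j(q_at)=\frac{t^{j}}{(j-1)!}h_aq_a^{j-1}w_{t,a}$, your $F$ equals $\Phi(t)\sum_a\frac{h_a}{q_a}g_j(q_at)$. But the decisive computation is only announced, and the structure you predict for it is wrong. You need neither the expansion via $r'=-r(1+r)$ nor a second integration by parts. Write $g_j'(q_at)=q_a^{-1}\partial_t g_j(q_at)$ and integrate by parts once against $\Phi$, using $\Phi'=\Phi\sum_i q_iw_{t,i}$. This removes the whole $g_j'$-sum and gives $\psi'(\theta)=\int_0^\infty\Phi(t)K(t)\,dt$ with
\[
K(t)=\frac{t^{j}}{(j-1)!}\Bigl[\Bigl(\sum_k q_k^{j}w_{t,k}\Bigr)\Bigl(\sum_a h_aw_{t,a}\Bigr)-\Bigl(\sum_i q_iw_{t,i}\Bigr)\Bigl(\sum_a h_aq_a^{j-1}w_{t,a}\Bigr)\Bigr].
\]
The combination you anticipated is $(\sum_a w_{t,a}h_aq_a^{j-1})(\sum_b w_{t,b}q_b)-(\sum_a w_{t,a}h_aq_a)(\sum_b w_{t,b}q_b^{j-1})$. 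That equals $(\sum_a w_{t,a}q_a)^2$ times a covariance of $h$ and $q^{j-2}$ under the weights $w_{t,a}q_a$, which vanishes identically for $j=2$; it is not $W(t)^2\Cov_{w_t}(h,q^{j-1})$. What actually finishes the argument is your Step~2 in pointwise form. Regard $K$ as a linear form $K[v]$ in the vector $v$ placed where $h$ stands. Then $K[q]\equiv0$ and $K[\mathbf 1]=\frac{t^j}{(j-1)!}W(t)^2\Cov_{w_t}(q,q^{j-1})$, so $h=(q-\frac1N\mathbf 1)/\theta$ gives $K=-\frac{1}{N\theta}K[\mathbf 1]$.

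This exposes the step that genuinely fails: the factor $1/\theta$ cannot be ``absorbed'' by $\sum_ah_a=0$ or by homogeneity. What one actually proves is
\[
\psi'(\theta)=-\frac{1}{N\theta}\int_0^\infty\frac{t^{j}}{(j-1)!}\Phi(t)W(t)^2\Cov_{w_t}\bigl(q,q^{j-1}\bigr)dt
=-\frac{1}{N}\int_0^\infty\frac{t^{j}}{(j-1)!}\Phi(t)W(t)^2\Cov_{w_t}\bigl(h,q^{j-1}\bigr)dt,
\]
so \eqref{eq:final} as displayed is correct only at $\theta=1$. Here is a check. As $\theta\to0$, $\Cov_{w_t}(q,q^{j-1})=O(\theta^2)$ uniformly in $t$, so \eqref{eq:final} would force $\psi''(0)=0$. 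That contradicts the nondegenerate Hessian of Corollary~\ref{cor:hess}. The $1/\theta$ version instead gives, for $j=2$, $\psi''(0)=-\|h\|^2N^3\int_0^\infty x^2e^{-2x}(1-e^{-x})^{N-2}\,dx$, which equals $-\frac{19}{4}\|h\|^2$ at $N=3$, matching $\lambda_{3,2}$. The paper's own proof reaches \eqref{eq:final} by writing $h_a=q_a-\frac1N$, which holds only at $\theta=1$. You were right to flag the $1/\theta$; prove the corrected identity instead. Its integrand still has a strict sign, so Theorem~\ref{thm:main} is unaffected.
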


\begin{proof}
Differentiate~\eqref{eq:integral} along the segment. Differentiation under the
integral sign is justified on $\theta\in(0,1]$ because, with $\delta:=\min_a q_a>0$
fixed, the integrand and its $\theta$-derivative are dominated near $t=0$ by a multiple
of $t^{\,N+j-2}$ and for large $t$ by an exponentially decaying function, uniformly in
a neighborhood of $\theta$. Using $\partial_\theta q_a=h_a$,
$\partial_\theta\log\Phi(t)=t\sum_i h_i\,r(q_i t)$ and
$\partial_\theta g_j(q_a t)=t\,h_a\,g_j'(q_a t)$, we obtain
\begin{equation}\label{eq:psiprime}
\psi'(\theta)=\int_0^\infty \Phi(t)\Bigl[\,S(t)\sum_a h_a\,r(q_a t)
+\sum_a h_a\,g_j'(q_a t)\Bigr]dt,
\qquad S(t):=\sum_{k}g_j(q_k t).
\end{equation}
Neither bracketed sum has a fixed sign in general. We remove the derivative
$g_j'$ from the second sum by integrating by parts in $t$. Since
$g_j'(q_a t)=\tfrac1{q_a}\,\partial_t\,g_j(q_a t)$,
\[
\int_0^\infty \Phi(t)\sum_a\frac{h_a}{q_a}\,\partial_t g_j(q_a t)\,dt
=\Bigl[\Phi(t)\sum_a\frac{h_a}{q_a}\,g_j(q_a t)\Bigr]_0^\infty
-\int_0^\infty \Phi'(t)\sum_a\frac{h_a}{q_a}\,g_j(q_a t)\,dt .
\]
The boundary term vanishes at both ends: as $t\to\infty$, $\Phi(t)\to1$ while each
$g_j(q_a t)\sim (q_a t)^{j}e^{-q_a t}/(j-1)!\to0$; as $t\to0^+$,
$\Phi(t)\sim t^{N}\prod_i q_i$ and $g_j(q_a t)\sim (q_a t)^{\,j-1}/(j-1)!$, so the
product is $O\bigl(t^{\,N+j-1}\bigr)\to0$ (here $N\ge2$ and $j\ge2$). With
$\Phi'(t)=\Phi(t)\sum_i q_i\,r(q_i t)$, the second sum in~\eqref{eq:psiprime}
contributes
\[
-\int_0^\infty \Phi(t)\Bigl(\sum_i q_i\,r(q_i t)\Bigr)
\Bigl(\sum_a\frac{h_a}{q_a}\,g_j(q_a t)\Bigr)dt .
\]
Hence $\psi'(\theta)=\int_0^\infty\Phi(t)\,K(t)\,dt$, where
\begin{equation}\label{eq:K}
K(t)=S(t)\sum_a h_a\,r(q_a t)
-\Bigl(\sum_i q_i\,r(q_i t)\Bigr)\Bigl(\sum_a\frac{h_a}{q_a}\,g_j(q_a t)\Bigr).
\end{equation}
Now insert $g_j(x)=x^{\,j}r(x)/(j-1)!$. Writing $c=c(t)=t^{\,j}/(j-1)!$ and
$w_a=r(q_a t)$, we have $g_j(q_a t)=c\,q_a^{\,j}w_a$, hence $S(t)=c\sum_k q_k^{\,j}w_k$
and $\tfrac{h_a}{q_a}g_j(q_a t)=c\,h_a q_a^{\,j-1}w_a$. Therefore
\[
K(t)=c\Bigl[\Bigl(\sum_k q_k^{\,j}w_k\Bigr)\Bigl(\sum_a h_a w_a\Bigr)
-\Bigl(\sum_i q_i w_i\Bigr)\Bigl(\sum_a h_a q_a^{\,j-1}w_a\Bigr)\Bigr].
\]
Let $\E_w[X]=\tfrac1W\sum_a w_a X_a$ be the expectation under the weights $w_a$, with
$W=\sum_a w_a$. Because $h_a=q_a-\tfrac1N$ we have $\E_w[h]=\E_w[q]-\tfrac1N$ and
$\E_w[h\,q^{\,j-1}]=\E_w[q^{\,j}]-\tfrac1N\E_w[q^{\,j-1}]$, so
\[
\frac{K(t)}{c\,W^2}
=\E_w[q^{\,j}]\,\E_w[h]-\E_w[q]\,\E_w[h\,q^{\,j-1}]
=\frac1N\Bigl(\E_w[q]\,\E_w[q^{\,j-1}]-\E_w[q^{\,j}]\Bigr)
=-\frac1N\,\Cov_w\!\bigl(q,\,q^{\,j-1}\bigr),
\]
the terms $\E_w[q^{\,j}]\E_w[q]$ cancelling. Multiplying by
$c\,W^2=\tfrac{t^{\,j}}{(j-1)!}W(t)^2$ and integrating against $\Phi$
gives~\eqref{eq:final}.
\end{proof}

\subsection*{Conclusion}

\begin{proof}[Proof of Theorem~\ref{thm:main}]
Fix $j\ge2$. The function $x\mapsto x^{\,j-1}$ is strictly increasing on $(0,\infty)$.
For any nonnegative weights and any real values, the covariance of two
similarly ordered functions is nonnegative (Chebyshev's correlation inequality):
\[
\Cov_{w_t}\!\bigl(q,\,q^{\,j-1}\bigr)\ge0,
\]
with equality if and only if $q_a$ is constant in $a$. For $\pvec\ne\uvec$ and
$\theta\in(0,1]$ the coordinates $q_a(\theta)=\tfrac1N+\theta h_a$ are not all equal
(as $h\ne0$), so the covariance is strictly positive for every $t>0$. Since
$\tfrac{t^{\,j}}{(j-1)!}>0$, $\Phi(t)>0$ and $W(t)^2>0$ on $(0,\infty)$, the integrand
in~\eqref{eq:final} is strictly negative for all $t>0$, whence $\psi'(\theta)<0$.

Thus $\psi$ is strictly decreasing on $[0,1]$, and in particular
$\E[U_j^N](\pvec)=\psi(1)<\psi(0)=\E[U_j^N](\uvec)$. As $\pvec\in\PN\setminus\{\uvec\}$
was arbitrary, $\uvec$ is the unique maximizer.
\end{proof}

\subsection*{Consequences and remarks}

\begin{corollary}[Hessian at the uniform vector]\label{cor:hess}
The uniform vector is a nondegenerate strict local maximizer of $\E[U_j^N]$ on $\PN$.
By the permutation symmetry of~\eqref{eq:closed}, the Hessian of $\E[U_j^N]$ at
$\uvec$, restricted to the tangent space $T=\{h\in\R^N:\sum_i h_i=0\}$, is a scalar
multiple $\lambda_{N,j}\,I_T$ with $\lambda_{N,j}<0$. For $j=2$,
\[
\lambda_{3,2}=-\tfrac{19}{4},\qquad \lambda_{4,2}=-\tfrac{230}{27}.
\]
\end{corollary}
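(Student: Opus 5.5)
The plan is to prove the three assertions in order: the scalar structure of the Hessian from symmetry, the sign of $\lambda_{N,j}$ from Lemma~\ref{lem:radial}, and the two numerical values from Proposition~\ref{prop:closed}.

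\emph{Scalar structure.} The closed form~\eqref{eq:closed} is a rational function of $\pvec$ that is smooth near $\uvec$. It is invariant under every coordinate permutation $\sigma\in S_N$, and $\uvec$ is fixed by all of them. Hence the Hessian $H$ at $\uvec$ satisfies $P_\sigma^{\top} H P_\sigma=H$. Its restriction to $T$ is therefore a symmetric bilinear form invariant under the standard representation of $S_N$ on $T$. That representation is absolutely irreducible, so Schur's lemma gives $H|_T=\lambda_{N,j}I_T$. Equivalently, the only invariant quadratic forms on $\R^N$ are combinations of $\sum_i h_i^2$ and $(\sum_i h_i)^2$, and the second vanishes on $T$.

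\emph{Sign of $\lambda_{N,j}$.} For $h\in T\setminus\{0\}$ with $\uvec+\theta h\in\PN$ near $\theta=0$, put $\psi(\theta)=\E[U_j^N](\uvec+\theta h)$. Then $\psi''(0)=\lambda_{N,j}\|h\|^2$.

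1. The derivation of Lemma~\ref{lem:radial} never used $\theta>0$; it only needed all $q_a>0$. So~\eqref{eq:final} holds on a neighbourhood of $\theta=0$.
2. At $\theta=0$ all $q_a$ equal $1/N$, so the covariance vanishes and $\psi'(0)=0$, consistent with $\uvec$ being critical.
3. Differentiate~\eqref{eq:final} once more at $\theta=0$. Since the covariance vanishes at $\theta=0$, only the term where the derivative falls on it survives. The weights $w_{t,a}=r(t/N)$ are equal at $\theta=0$, so to first order
\[
\Cov_{w_t}(q,q^{j-1}) = \theta\,(j-1)N^{-(j-2)}\cdot\tfrac1N\|h\|^2+O(\theta^2).
\]
This comes from $q^{j-1}\approx N^{1-j}+(j-1)N^{2-j}\theta h$ and the uniform variance of $h$, which is $\|h\|^2/N$.
4. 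Interchanging the derivative and the integral is justified by the same domination used in the proof of Lemma~\ref{lem:radial}. This yields
\[
\lambda_{N,j}=-\frac{(j-1)N^{2-j}}{N^2}\cdot N^2\int_0^\infty\frac{t^{j}}{(j-1)!}\Bigl(1-e^{-t/N}\Bigr)^N r(t/N)^2\,dt ,
\]
where $N^2 r(t/N)^2=W(t)^2$ at $\theta=0$.

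The integrand is strictly positive, so $\lambda_{N,j}<0$. Thus $H|_T$ is negative definite, and $\uvec$ is a nondegenerate strict local maximizer.

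\emph{Explicit values.} For $j=2$ and $N=3,4$ there are two routes.

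1. \emph{Integral route.} Write $(1-e^{-x})^N r(x)^2=e^{-2x}(1-e^{-x})^{N-2}$ with $x=t/N$. Expand binomially and integrate termwise using $\int_0^\infty t^2e^{-mt/N}\,dt=2N^3/m^3$. This gives $\lambda_{N,2}$ as a short finite sum of rational numbers.
2. \emph{Closed-form route (cross-check).} Differentiate~\eqref{eq:closed} twice along $h=(1,-1,0,\dots,0)$, for which $\|h\|^2=2$, giving $2\lambda$. Only subsets $A$ meeting $\{1,2\}$ contribute nontrivially. The terms $\sum_{i\in A}p_i^2/p_A^2$ are elementary to expand to second order.

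The main obstacle is the bookkeeping in step 3 above, namely tracking exactly which factor carries the first-order $\theta$-dependence and getting the powers of $N$ right. I will therefore check the resulting integral formula against the closed-form computation for $N=3$ before asserting $-\tfrac{19}{4}$ and $-\tfrac{230}{27}$.
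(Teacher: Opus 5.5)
Your scalar-structure argument matches the paper's: Schur's lemma on the standard representation, and your $aI+bJ$ remark is an equally valid elementary substitute. Your final integral for $\lambda_{N,j}$ is correct. However, step~3 is wrong as written and reaches the right answer only because it cancels a slip in \eqref{eq:final}.

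Since $q_a-\tfrac1N=\theta h_a$, one has $\Cov_{w_t}(q,q^{\,j-1})=\theta\,\Cov_{w_t}(h,q^{\,j-1})$. Since $q_a^{\,j-1}-N^{1-j}=O(\theta)$, also $|\Cov_{w_t}(h,q^{\,j-1})|\le C|\theta|$ uniformly in $t$. So $\Cov_{w_t}(q,q^{\,j-1})$ is of order $\theta^2$ (pointwise $(j-1)N^{2-j}\theta^2\|h\|^2/N+O(\theta^3)$), not of order $\theta$. Expanded correctly, \eqref{eq:final} as printed gives $\psi'(\theta)=O(\theta^2)$ and hence $\lambda_{N,j}=0$.

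The culprit is your step~1. The proof of Lemma~\ref{lem:radial} does use something special about $\theta$: the identity $h_a=q_a-\tfrac1N$, which holds only at $\theta=1$. The printed right side of \eqref{eq:final} actually equals $\theta\,\psi'(\theta)$. Quick check with $N=j=2$:
\begin{itemize}[label={}, leftmargin=1.5em]
\item \eqref{eq:closed} gives $\E[U_2^2]=2-p_1^2-p_2^2$, so $\psi(\theta)=\tfrac32-\theta^2\|h\|^2$ and $\psi'(\theta)=-2\theta\|h\|^2$.
\item The right side of \eqref{eq:final}, using $\Phi\,w_{t,1}w_{t,2}=e^{-t}$, evaluates to $-2\theta^2\|h\|^2$.
\end{itemize}

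The repair is to stop at \eqref{eq:K}. That formula is valid for every $\theta$ with $\qvec(\theta)\in\PN$, including $\theta\le 0$. Then redo the last algebraic step with $q=\tfrac1N+\theta h$, writing $\E_w[q^{\,j}]=\tfrac1N\E_w[q^{\,j-1}]+\theta\,\E_w[h\,q^{\,j-1}]$ and $\E_w[q]=\tfrac1N+\theta\,\E_w[h]$. The $\theta$-terms cancel and
\[
\frac{K(t)}{c\,W^2}=-\frac1N\,\Cov_{w}\bigl(h,\,q^{\,j-1}\bigr)\;\Bigl(=-\frac{1}{N\theta}\,\Cov_{w}\bigl(q,\,q^{\,j-1}\bigr)\text{ for }\theta\ne0\Bigr).
\]
Your first-order expansion is exactly the correct expansion of $\Cov_{w_t}(h,q^{\,j-1})$. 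With the uniform bound above, dominated convergence then yields your formula for $\lambda_{N,j}$ legitimately.

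With that fix, your route differs from the paper's on the sign claim and is stronger. The paper takes negativity of the tangent Hessian as immediate from Theorem~\ref{thm:main}. But strict decrease along rays together with $\psi'(0)=0$ only gives $\psi''(0)=\lim_{\theta\downarrow0}\psi'(\theta)/\theta\le 0$. Strictness, and hence nondegeneracy, is confirmed there only through the explicit values for $j=2$, $N=3,4$, obtained as $H_{11}-H_{12}$ from \eqref{eq:closed}. Your second-order computation gives, for all $N\ge2$ and $j\ge2$,
\[
\lambda_{N,j}=-(j-1)N^{2-j}\int_0^\infty\frac{t^{\,j}}{(j-1)!}\,e^{-2t/N}\bigl(1-e^{-t/N}\bigr)^{N-2}\,dt
=-j(j-1)N^{3}\sum_{m=0}^{N-2}(-1)^m\binom{N-2}{m}(m+2)^{-(j+1)}.
\]
The integral form is manifestly negative. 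It reproduces $-\tfrac{19}{4}$ and $-\tfrac{230}{27}$, and gives $-2$ for $N=2$. The closed-form computation then serves only as the cross-check you planned.
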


\begin{proof}
Negativity of the tangent Hessian is immediate from Theorem~\ref{thm:main}. The
restricted Hessian commutes with all coordinate permutations; since $T$ is the
standard representation of the symmetric group $S_N$, which is irreducible over $\R$,
Schur's lemma forces the restricted Hessian to be a real scalar times the identity.
The scalar is read off from a single tangent direction $h=e_1-e_2$ as
$\lambda_{N,j}=H_{11}-H_{12}$, where $H$ is the (unrestricted) Hessian
of~\eqref{eq:closed} at $\uvec$. Differentiating~\eqref{eq:closed} twice and evaluating
at $\uvec$ gives, for $j=2$, $(H_{11},H_{12})=(-\tfrac{19}{6},\tfrac{19}{12})$ at $N=3$
and $(-\tfrac{115}{18},\tfrac{115}{54})$ at $N=4$, hence the stated values.
\end{proof}

\begin{remark}[Failure of Schur-concavity]\label{rem:schur}
It is natural to ask whether Theorem~\ref{thm:main} follows from Schur-concavity of
$\pvec\mapsto\E[U_j^N]$, which would give the extremal property at once via
majorization. It does not: $\E[U_j^N]$ is not Schur-concave. Already for $j=2$ the
Schur--Ostrowski quantity $(p_a-p_b)\bigl(\partial_{p_a}-\partial_{p_b}\bigr)\E[U_j^N]$
changes sign on $\PN$, and a single mass-equalizing transfer between two coordinates
can strictly decrease $\E[U_j^N]$. Consequently no sequence of pairwise smoothings can
reach the maximum, and a majorization argument is unavailable. The radial
derivative~\eqref{eq:final}, by contrast, is a single covariance and therefore carries
a definite sign even though the individual coordinate-pair differences do not; the
passage from the indefinite form~\eqref{eq:psiprime} to~\eqref{eq:final} through one
integration by parts is what makes the global sign visible.
\end{remark}

\begin{remark}[The companion variance problem]\label{rem:long}
Doumas and Papanicolaou~\cite{DP2012} conjectured that equal probabilities minimize
the variance of the completion time $T_m(N)$ in the double Dixie cup problem; the case
$m=1$ was settled by Yu~\cite{Yu}, and the general case was recently proved by
Long~\cite{Long} in the same radial form as our Theorem~\ref{thm:main}. Long works
from the factorized completion-time law $\PP(T_m(N)\le t)=\prod_i F_m(p_i t)$ and
compares the radial derivative of the distribution to a size-biased law in
monotone-likelihood-ratio order, using a log-scale monotonicity of the Gamma reverse
hazard. That machinery does not apply to the present problem: $U_j^N$ is a count of
deficient coordinates evaluated at the \emph{main} collector's stopping time, not a
completion time, and so possesses no maximum-of-independent-Erlangs distribution to
differentiate; moreover the functional here is a first moment rather than a variance,
and the extremum is a maximum rather than a minimum. What the two arguments share is
only the high-level plan---Poissonize, differentiate radially, and sign the result by
an oppositely-ordered comparison. In~\eqref{eq:DP} the Poissonization is already built
in, and after the integration by parts of Lemma~\ref{lem:radial} the comparison
reduces to the elementary Chebyshev covariance inequality, with no
likelihood-ratio analysis required.
\end{remark}

\begin{remark}[Scope and sharpness]\label{rem:scope}
The hypothesis $j\ge2$ enters only through the monotonicity of $x\mapsto x^{\,j-1}$,
and $\pvec\in\PN$ only to keep the segment interior so that the boundary terms in the
integration by parts vanish; no asymptotics in $N$ are used and the identities are
exact. Consequently the result holds verbatim for every real $j>1$, with $U_j^N$
replaced by the corresponding integral functional~\eqref{eq:integral}: the uniform
vector maximizes that functional as well. The strict radial monotonicity also shows
that $\uvec$ is the \emph{only} critical point of $\E[U_j^N]$ in $\PN$, since
$\psi'(\theta)<0$ for all $\theta\in(0,1]$ rules out any interior stationary point off
$\uvec$.
\end{remark}

\end{document}